\newtheorem{lemma}{Lemma}[section]
\newtheorem{theorem}[lemma]{Theorem}
\newtheorem{definition}[lemma]{Definition}
\newcommand{\gin}{\tilde\in}
\newcommand{\RCA}{$\mathbf{RCA_0}$}
\newcommand{\ACA}{$\mathbf{ACA_0}$}
\newcommand{\ATR}{$\mathbf{ATR_0}$}
\newcommand{\Sigmati}{$\mathbf{\Sigma^1_1-TI_0}$}
\author{Henry Towsner}
\title{Hindman's Theorem: An Ultrafilter Argument in Second Order Arithmetic}
\date{\today}
\begin{document}
\maketitle

\begin{abstract}
  Hindman's Theorem is a prototypical example of a combinatorial theorem with a proof that uses the topology of the ultrafilters.  We show how the methods of this proof, including topological arguments about ultrafilters, can be translated into second order arithmetic.
\end{abstract}

\section{Introduction}
The topology of the ultrafilters on the natural numbers---the Stone-\v{C}ech compactification---has long been a powerful tool in combinatorics (\cite{hindman98} gives an extensive treatment).  Such proofs, however, are invariably quite infinitary: the construction of even a single nonprincipal ultrafilter requires (a weak form of) the axiom of choice, and typical proofs involve not just arbitrary ultrafilters, but special classes of them which require further applications of the axiom of choice to obtain.

The first application of this method was to Hindman's Theorem, which states that in any finite partition of the natural numbers, some element of the partition contains infinitely many integers and their finite sums.  There are three standard proofs: Hindman's original combinatorial argument \cite{hindman74}, Baumgartner's streamlined combinatorial argument \cite{baumgartner74}, and the Galvin-Glazer proof using ultrafilters (see \cite{comfort77} or \cite{hindman98}).  The first two are thoroughly analyzed in \cite{blass87}, where it is shown that the Baumgartner proof may be formalized in the formal system $\mathbf{\Pi^1_2-TI_0}$ and the Hindman proof in the weaker system $\mathbf{ACA_0^+}$.  The strongest reversal obtained is that Hindman's Theorem implies \ACA{} over \RCA, leaving a gap in the strength of the theorem which remains unresolved.

Glazer's proof, however, is left untouched, since it appears to require the use of fourth order objects (closed semigroups of ultrafilters).  Hirst \cite{hirst04} has used a natural coding of these closed semigroups by sets of integers to show that the existence of a weakened notion of ultrafilter (a ``partial'' ultrafilter---a filter which is guaranteed to contain either $A$ or $A^c$ whenever $A$ comes from a countable collection fixed in advance) is sufficient to imply not only Hindman's Theorem, but also an iterated form of Hindman's Theorem.

Here we answer a question asked by Hirst \cite{birs08}, among others, by showing how to carry out an analog of Glazer's proof in second order arithmetic.  While we note the reverse mathematical strength of our main arguments, we do not discuss the reverse mathematical concerns further; thorough explanations of all reverse mathematics referenced can be found in \cite{simpson99}.

We further explore the consequences of this method in \cite{towsner09:hindman_simple}, where we give a short, explicit proof of Hindman's Theorem within $\mathbf{ACA_0^+}$, similar to Hindman's original proof but based on the methods used here.  

We are grateful to Mathias Beiglb\"ock for many helpful discussions about the many facets of Hindman's Theorem.

\section{General Definitions}
Our basic definitions are modeled on those in \cite{hirst04}.  Since other sections of the proof already force us to work in a system much stronger that \ACA, we sometimes take definitions which are equivalent to Hirst's in \ACA{} but not in \RCA.

\begin{definition}
\begin{itemize}
\item   Given a set $X$, $X-n:=\{m\mid m+n\in X\}$.
\item   If $U=\{ U_i\mid i\in\mathbb{N}\}$ is a sequence of subsets of $\mathbb{N}$ and $F\subseteq\mathbb{N}$ is finite, we write $U_F:=\bigcap_{i\in F}U_i$.
\item We write $X\gin U$ if there is a finite $F\subseteq\mathbb{N}$ such that $U_F\subseteq X$.
\item  A countable sequence $U=\{ U_i\mid i\in\mathbb{N}\}$ of subsets of $\mathbb{N}$ \emph{satisfies the finite intersection property} (``satisfies fip'') if for any finite $F\subseteq\mathbb{N}$, $U_F$ is infinite.
\item If $U,V$ satisfy fip, we define $X\gin U+V$ if there is a $Y\gin V$ such that for every $n\in Y$, $X-n\gin U$.
\item We say $U$ is a \emph{semigroup} if $U$ satisfies fip and $X\in U$ implies $X\gin U+U$.
\end{itemize}
\end{definition}
A sequence $U$ satisfying fip can be seen as a code for a non-empty closed set in the topology of $\beta\mathbb{N}$, the Stone-\v{C}ech compactification of the natural numbers.  (That is, the space of ultrafilters over $\mathbb{N}$.)  Namely, $U$ should be viewed as the set of ultrafilters $p$ such that $U\subseteq p$.  Saying that $U$ is a semigroup in our sense is precisely saying that the closed set coded by $U$ is in fact a semigroup.  We can think of closed sets as ``approximate ultrafilters'', deciding only some sets while leaving others ambiguous, and closed semigroups as ``approximate idempotents''.

\begin{definition}
  Let $S$ be a set of integers.  The finite sums from $S$, $FS(S)$, are defined by
\[FS(S):=\{\sum_{i\in F}i\mid F\subseteq S\wedge F\text{ is finite}\}.\]
It is also convenient to define $NS(S):=FS(S)\setminus\{0\}$, the non-empty finite sums from $S$.
\end{definition}

Our goal is to prove:
\begin{theorem}[Hindman's Theorem]
  For every finite partition of the natural numbers, $\mathbb{N}=C_1\cup\cdots\cup C_n$, there is some $i$ and an infinite set $X$ such that $NS(X)\subseteq C_i$.
\end{theorem}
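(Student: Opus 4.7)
The plan is to mimic the Galvin--Glazer proof using the approximate-ultrafilter machinery of Section~2: first produce a semigroup sequence $U$ (the analog of an idempotent ultrafilter $p=p+p$), then use the finite partition to pick out an $i$ with $C_i\gin U$, and finally exploit the semigroup property iteratively to extract the Hindman set.

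The first step is the main obstacle. Classically, one invokes Ellis's lemma for compact right-topological semigroups to find an idempotent in $\beta\mathbb{N}$; in our setting, we want a sequence $U$ satisfying fip such that $X\in U$ implies $X\gin U+U$. I would construct $U$ as the limit of a nested chain $U^{(0)}\supseteq U^{(1)}\supseteq\cdots$ of fip sequences, at each stage handling one candidate set $X$ and refining so that $X\gin U^{(k+1)}+U^{(k+1)}$ while preserving fip; some minimality discipline is needed to keep the idempotency condition from being destroyed at later stages. This construction is inherently impredicative, in line with the paper's warning that its methods reach well beyond \ACA{}.

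Given such a semigroup $U$ and a finite partition $\mathbb{N}=C_1\cup\cdots\cup C_n$, a fip pigeonhole produces a refinement $U'$, still a semigroup, with some $C_i\in U'$: since $\mathbb{N}\gin U$ is covered by the $C_j$, at least one $C_i$ meets every $U_F$ in an infinite set, and replacing the $U_i$ by their intersections with $C_i$ (and reapplying the idempotent construction of step~1 to repair the semigroup property) delivers such a $U'$. In practice it is cleanest to fold the partition into the original construction so that some $C_i$ already belongs to $U$.

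For the main extraction, fix $i$ with $C_i\gin U$. A short argument shows that $X\gin U$ implies $X\gin U+U$: if $U_F\subseteq X$, intersect, across $j\in F$, the witnesses $Y_j\gin U$ coming from $U_j\gin U+U$. Now build $x_1<x_2<\cdots$ inductively: set $A_0=C_i$, and given $A_k\gin U$, use $A_k\gin U+U$ to obtain $Y_k\gin U$ with $A_k-n\gin U$ for every $n\in Y_k$; choose $x_{k+1}\in Y_k\cap A_k$ larger than every previous $x_j$, and let $A_{k+1}=A_k\cap (A_k-x_{k+1})$. Since the intersection of two sets that are $\gin U$ is again $\gin U$, each $A_k$ remains $\gin U$ (hence nonempty, indeed infinite), so the induction goes through; a straightforward induction on $|F|$ then gives $\sum_{j\in F}x_j\in C_i$ for every finite nonempty $F$, establishing $NS(\{x_1,x_2,\ldots\})\subseteq C_i$. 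The genuinely hard content is thus isolated in step~1, the existence of the semigroup $U$.
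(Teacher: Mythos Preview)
Your extraction step (step~3) is correct and matches the paper's sketch following Theorem~\ref{main}. The gap is in steps~1--2, and you have mislocated the difficulty. In the paper's sense, \emph{any} fip sequence with the closure property is a semigroup, so $U=\{\mathbb{N}\}$ already qualifies; there is nothing to do to produce ``a semigroup'' in the abstract. The hard part is your step~2: obtaining a semigroup $V$ with some $C_i\in V$. Fip pigeonhole only gives an $i$ with $U\cup\{C_i\}$ satisfying fip, and adjoining $C_i$ will typically destroy the semigroup property; your proposed fix (``reapply the idempotent construction of step~1 to repair'') is circular, since step~1 as you describe it never explains how to carry a prescribed set through the construction. ``Folding the partition into the original construction'' is likewise just a restatement of the problem. (Incidentally, your chain $U^{(0)}\supseteq U^{(1)}\supseteq\cdots$ of fip sequences runs the wrong way: more sets in $U$ code a smaller closed set in $\beta\mathbb{N}$, so the semigroups one builds form an \emph{increasing} chain, as in the paper.)

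The paper's route is not a nested-chain construction at all. It proves an extension theorem (Theorem~\ref{main}): for any semigroup $U$ and any set $A$ there is a semigroup $V\supseteq U$ with $A\gin V$ or $A^c\gin V$. This is obtained in Theorems~\ref{part1} and~\ref{part2} by a transfinite induction along the Kleene--Brouwer ordering of the tree of finite sequences $\sigma$ that could serve as initial segments of a Hindman set for $A$ relative to $U$; at each well-founded node, Lemmata~\ref{extension1}--\ref{extension3} manufacture a strictly larger semigroup $V_\sigma$ ruling out every $\tau\preceq\sigma$, while an infinite branch through the tree directly yields the desired $S$. Hindman's Theorem then follows by applying this extension theorem at most $n$ times to decide $C_1,\ldots,C_n$ in turn (one cannot land in $C_j^c$ for every $j$, since their intersection is empty), after which your step~3 finishes the job. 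The Kleene--Brouwer tree argument replacing Zorn's Lemma is the key idea your sketch is missing.
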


We will derive this along the way to the following:
\begin{theorem}
  \label{main}
For every semigroup $U$, there is a semigroup $V$ extending $U$ such that either $A\gin V$ or $A^c\gin V$.
\end{theorem}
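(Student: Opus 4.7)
The strategy is to adapt the Galvin--Glazer ultrafilter proof to the ultrafilter-free framework of the paper. Classically, one invokes Zorn's Lemma to extract a minimal closed subsemigroup of the closed set coded by $U$; such a minimal subsemigroup is a singleton $\{p\}$ with $p$ an idempotent ultrafilter, which decides every set and in particular $A$. In our setting I will replace ``idempotent ultrafilter inside $U$'' by ``semigroup $V$ extending $U$ that decides $A$'', and try to construct such a $V$ directly, without a maximality principle.

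First I would aim for the following dichotomy: given a semigroup $U$ and a set $A$, at least one of $A$ and $A^c$ admits a semigroup extension of $U$. I would build a candidate $V^A$ by a stage-by-stage closure. Begin with the sequence $U$ augmented by the set $A$; at each subsequent stage, for every index $i$ currently present, force $V^A_i \gin V^A + V^A$ by adjoining a witness $Y\gin V^A$ together with the shifts $V^A_i - m$ for $m\in Y$. The union over countably many stages yields a sequence $V^A$. If $V^A$ satisfies fip then it is already the desired semigroup extension containing $A$, and we are done; otherwise I would perform the symmetric construction with $A^c$ in place of $A$, producing $V^{A^c}$.

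The main obstacle is showing that $V^A$ and $V^{A^c}$ cannot both fail fip. Each failure is witnessed at a finite stage by a finite system of sets---of the form $A$ or $A^c$, shifts thereof, and intersections of $U_i$'s---whose overall intersection is empty past some point. The key combinatorial lemma required is that two such finite obstructions, one for $A$ and one for $A^c$, can be glued, using associativity of the operation $+$ and the semigroup property of $U$, into a violation of $X \gin U+U$ for some $X\in U$, contradicting the hypothesis on $U$. Making the choice of witnesses at each stage definable enough for the recursion to be carried out, and executing this gluing argument carefully within arithmetic, is the delicate point; it is consistent with the paper's stated intent that the ambient system will sit well above \ACA{}.
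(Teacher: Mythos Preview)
Your proposal has a real gap at the ``gluing'' lemma, and it is not just a matter of care. The construction of $V^A$ depends on which witness $Y$ you adjoin at each stage, and you never specify how to choose it. More importantly, $V^A$ is \emph{one particular} attempted extension of $U\cup\{A\}$; if it fails fip, that does not rule out the existence of some other semigroup $W\supseteq U$ with $A\gin W$. Consequently the dichotomy you actually need---``either $V^A$ has fip or $V^{A^c}$ has fip''---is strictly stronger than the theorem, and there is no reason a finite obstruction to your specific $V^A$ and a finite obstruction to your specific $V^{A^c}$ should combine into a violation of $X\gin U+U$ for some $X\in U$. The semigroup property of $U$ speaks only about sets already in $U$, not about $A$ or its shifts, so the appeal to associativity of $+$ and the semigroup hypothesis does not carry the gluing.

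The paper's argument avoids committing to a single construction by searching over all of them. It forms the tree of finite increasing sequences $\sigma=\langle s_1<\cdots<s_k\rangle$ with $NS(\sigma)\subseteq A$, $s_i\in U_{F_i}$, and $U\cup\{A-n\mid n\in FS(\sigma)\}$ satisfying fip. If there is an infinite path $S$, then $V:=U\cup\{FS(S)-n\mid n\in FS(S)\}$ is a semigroup with $A\gin V$. Otherwise the tree is well-founded, and the paper runs a transfinite recursion along its Kleene--Brouwer ordering: at a node $\sigma$, the inductive hypothesis says that in the semigroup $V_{\prec\sigma}$ built so far, \emph{every} one-step extension $\sigma^\frown\langle n\rangle$ has already been killed; this is exactly the hypothesis that lets one invoke Theorem~\ref{part1} (and beneath it Lemmas~\ref{extension1}--\ref{extension3}) to produce $V_\sigma\supseteq V_{\prec\sigma}$ in which $\bigcap_{m\in FS(\sigma)}(A-m)$ fails fip. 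At the root one obtains a semigroup $V$ with $V\cup\{A\}$ failing fip, and Lemma~\ref{extension1} then yields an extension containing $A^c$. This tree-plus-Kleene--Brouwer recursion is what replaces the two uses of Zorn's Lemma in the Ellis argument, and it is the idea your outline is missing.
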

Note that in general, if we can prove a theorem in second order arithmetic plus the existence of an idempotent ultrafilter, we might hope to replace the derivation with the theorem above: if we can arrange all ``queries'' to the idempotent ultrafilter along a countable well-ordering (where later queries can depend on the results of previous ones), we can then iterate the preceding theorem along the ordering, and the final semigroup will then provide answers to all queries in the proof.  (Such methods have been used for proofs using nonprincipal ultrafilters with no additional properties; see, for instance, \cite{avigad98}.)

In particular, Hindman's Theorem follows easily: find a semigroup $V$ containing either $C_1$ or $C_1^c$; in the latter case, extend it to contain either $C_2$ or $C_2^c$.  Proceed until $V$ contains $C_i$ for some $i$.  There is a set $X$ such that $n\in X$ implies $C_i-n\gin V$, and choose $x_0\in C_i\cap X$, find $X_0$ such that $n\in X_0$ implies $C_i-x_0\gin V$, then choose $x_1\in C_i\cap C_i-x_0\cap X\cap X_0$, and so on.  The result is an infinite set with all non-zero finite sums contained in $C_i$.

Hirst gives a similar equivalence for Iterated Hindman's Theorem:
\begin{theorem}[\cite{hirst04}]
  The following are equivalent:
  \begin{itemize}
  \item If $\{G_i\mid i\in\mathbb{N}\}$ is a collection of subsets of $\mathbb{N}$ then there is an increasing sequence $\langle x_i\rangle_{i\in\mathbb{N}}\subseteq\mathbb{N}$ such that for every $i$ either $FS(\langle x_j\rangle_{j\geq i})\subseteq G_i$ or $FS(\langle x_j\rangle_{j\geq i})\subseteq G_i^c$.
  \item If $\{A_i\mid i\in\mathbb{N}\}$ is a collection of subsets of $\mathbb{N}$ then there is a semigroup $U$ such that $A_i\gin U$ or $A_i^c\gin U$ for each $i$.
  \end{itemize}
\end{theorem}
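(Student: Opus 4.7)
\emph{Proposal.} The equivalence splits naturally into two directions; the substantive one is $(2)\Rightarrow(1)$, while $(1)\Rightarrow(2)$ is a direct coding.

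For $(2)\Rightarrow(1)$, apply the semigroup hypothesis to $\{G_i\}$ and obtain a semigroup $U$. For each $i$, let $B_i\in\{G_i,G_i^c\}$ be whichever satisfies $B_i\gin U$. The task reduces to constructing an increasing sequence $\langle x_i\rangle$ with $FS(\langle x_j\rangle_{j\geq i})\subseteq B_i$ for every $i$. Build the sequence recursively, maintaining at stage $n$ a set $Y_n\gin U$ such that $Y_n + s \subseteq B_i$ for every $i\leq n$ and every $s\in FS(\{x_i,\ldots,x_{n-1}\})$; start with $Y_0 := B_0$. The invariant gives $B_i - s\gin U$, so the semigroup property yields $B_i - s\gin U+U$ and hence a witness $W_{i,s}\gin U$ with $B_i - s - m\gin U$ whenever $m\in W_{i,s}$. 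Choose $x_n\in Y_n\cap\bigcap_{i,s}W_{i,s}$ strictly greater than $x_{n-1}$ (possible because this intersection is $\gin U$, hence infinite), and set
\[
  Y_{n+1} := Y_n \cap B_{n+1} \cap \bigcap_{\substack{i\leq n\\ s\in FS(\{x_i,\ldots,x_{n-1}\})}} (B_i - s - x_n).
\]
Using the decomposition $FS(\{x_j,\ldots,x_n\}) = FS(\{x_j,\ldots,x_{n-1}\}) \cup (x_n + FS(\{x_j,\ldots,x_{n-1}\}))$, one checks the invariant persists. Since $x_m\in Y_m\subseteq B_i - s$ for $i\leq m$ and $s\in FS(\{x_i,\ldots,x_{m-1}\})$, induction on $m$ delivers $FS(\langle x_j\rangle_{j\geq i})\subseteq B_i$ for every $i$, as desired.

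For $(1)\Rightarrow(2)$, apply the iterated Hindman principle to $G_i := A_i$ to obtain a sequence $\langle x_i\rangle$, and define $U_i := FS(\langle x_j\rangle_{j\geq i})$. Then $U_F\supseteq FS(\langle x_j\rangle_{j\geq\max F})$ is infinite, giving fip. For the semigroup condition, fix $i$ and take $Y := U_i$; any $n\in U_i$ has the form $x_{k_1}+\cdots+x_{k_r}$ with $i\leq k_1<\cdots<k_r$, and $U_i - n\supseteq U_{k_r + 1}$, so $U_i - n\gin U$, giving $U_i\gin U+U$. Finally, the choice of the sequence gives $U_i\subseteq A_i$ or $U_i\subseteq A_i^c$, so $A_i\gin U$ or $A_i^c\gin U$ as required.

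The main obstacle I anticipate is the combinatorial bookkeeping in $(2)\Rightarrow(1)$: the indexing set $FS(\{x_i,\ldots,x_{n-1}\})$ grows exponentially in $n$, so one must ensure the intersection defining $Y_{n+1}$ stays $\gin U$. This reduces to closure of $\gin U$ under finite intersection, which is immediate from fip, but maintaining the finite witnesses $F$ for $U_F\subseteq Y_n$ along the recursion demands some care; one likely tracks such witnessing finite sets explicitly at each stage so that the construction is primitive recursive in the data presenting $U$.
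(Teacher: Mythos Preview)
This theorem is not proved in the paper; it is quoted from \cite{hirst04}, and the paper only remarks that Section~\ref{iht} establishes a strengthening of the second item (extending any given semigroup to one deciding every $A_i$). So there is no in-paper argument to compare your proposal against.

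That said, your argument is correct and is the expected direct proof. One small wrinkle: under the paper's convention $0\in FS(S)$, your construction in $(2)\Rightarrow(1)$ only secures $NS(\langle x_j\rangle_{j\geq i})\subseteq B_i$, since nothing forces $0\in B_i$; this is an artifact of notation (the paper itself oscillates between $FS$ and $NS$ in Section~\ref{iht}) rather than a defect in your reasoning. Your closing worry about bookkeeping is also unfounded: if $U_F\subseteq X$ and $U_G\subseteq Y$ then $U_{F\cup G}\subseteq X\cap Y$, so closure of $\gin U$ under finite intersections is immediate and no explicit tracking of witnessing finite sets is required along the recursion.
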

In Section \ref{iht} we prove a slightly stronger form of the second case.

\section{Hindman's Theorem}
The proof that an idempotent ultrafilter exists is due (in a slightly different context) to Ellis \cite{ellis69}.  This argument consists of two steps: first, given a closed semigroup $U$, we find a closed sub-semigroup $V$ and a $p\in V$ such that there is a $q\in V$ with $p=q+p$.  The second step iterates the first step to find the idempotent $p$.

The first step uses the fact that for any closed semigroup, $U+p=\{q\mid\exists r\in U q=r+p\}$ is itself a closed semigroup, and if $p\in U$ then $U+p\subseteq U$.  If $p\not\in U+p$ then $U+p\subsetneq U$; so, given $U_0:=U$, we can take an arbitrary $p_0\in U_0$, if $p_0\not\in U_0+p_0$, take an arbitrary $p_1\in U_1:=U_0+p_0$, and so on.  By Zorn's Lemma and the compactness of the space of ultrafilters, this process must terminate in a non-empty closed semigroup: that is, eventually $p_\alpha\in U_\alpha+p_\alpha$.

The second step is analogous: if $U$ is a closed semigroup and $p\in U$, $\{q\in U\mid q+p=p\}$ is a closed sub-semigroup, and either contains $p$---in which case $p$ is idempotent---or fails to contain $p$, in which case it is a proper closed sub-semigroup, so again, by Zorn's Lemma (applying the first step again at every stage), we must eventually find a $U$ and a $p$ such that $p+p=p$.

To make this argument within second order arithmetic, however, we must eliminate the use of Zorn's Lemma.  We do this by replacing each occurrence with an induction; roughly speaking, we argue that if we pick any set $A$, either we can carry out the entire step (either the first or second) using ultrafilters $p$ such that $A\in p$, or eventually $U_\alpha$ is contained entirely in $\overline{A^c}$ (the set of ultrafilters containing $A^c$), in which case $A^c\in U_\alpha$.

(It is worth noting that it most presentations of the proof, the two steps above are folded into a single step with a single application of Zorn's Lemma, by simply requiring in advance the we work with a minimal closed semigroup; when attempting to construct the semigroup explicitly, however, a failure of the first step requires a different construction than a failure of the second step, and therefore it is helpful to divide the argument.)

First we need some minor lemmata showing that if no ultrafilter extending $U$ contains $A$ then in fact $U$ can be extended to contain $A^c$.
\begin{lemma}[\RCA]
Suppose that $U$ is a semigroup but $U\cup\{A\}$ fails to satisfy fip.  Then there is an $X\gin U$ such that $U\cup\{A^c-n\mid n\in X\cup\{0\}\}$ is a semigroup.
\label{extension1}
\end{lemma}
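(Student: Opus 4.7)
The plan is to extract a finite witness to the failure of fip, use the semigroup hypothesis on $U$ to construct a set $X \gin U$ with a differences-closure property, and then verify the two semigroup conditions for $V := U\cup\{A^c-n\mid n\in X\cup\{0\}\}$.

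First I would extract a finite $F_0$ with $U_{F_0}\cap A=\{a_1,\dots,a_k\}$ finite, which exists precisely because $U$ satisfies fip but $U\cup\{A\}$ does not. The natural candidate is $X:=\{n: U_{F_0}-n\gin U\}$. Applying the semigroup property of $U$ to $U_{F_0}$ (which is $\gin U$) produces a witness $Y\gin U$ such that $n\in Y$ implies $U_{F_0}-n\gin U$, so $Y\subseteq X$ and hence $X\gin U$. For each $n\in X$, applying the semigroup property to $U_{F_0}-n$ yields $Z\gin U$ with $m\in Z$ implying $U_{F_0}-(n+m)\gin U$, i.e., $Z\subseteq X-n$, so $X-n\gin U$ as well.

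For fip of $V$, given finite $H\subseteq\mathbb{N}$ and finite $E\subseteq X\cup\{0\}$, I would pick for each $n\in E$ a witness $H_n$ with $U_{H_n}\subseteq U_{F_0}-n$ (taking $H_0:=F_0$ when $0\in E$), and set $G:=H\cup\bigcup_{n\in E}H_n$. Every $m\in U_G$ satisfies $m+n\in U_{F_0}$ for all $n\in E$, so $m+n\in A$ only when $m+n\in\{a_1,\dots,a_k\}$; this excludes only finitely many $m$, leaving infinitely many $m\in U_H\cap\bigcap_{n\in E}(A^c-n)$. For the semigroup condition, if $B\in U$ the semigroup property of $U$ already gives a witness in $U\subseteq V$; if $B=A^c-n$ for some $n\in X\cup\{0\}$, I would take $Y:=X-n$ (with the convention $X-0=X$), which is $\gin U\subseteq V$ by the closure of $X$. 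Then for every $m\in Y$ we have $n+m\in X$, so $A^c-(n+m)\in V$, giving $B-m\gin V$ immediately.

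The hard part will be defining $X$ in \RCA{}: the set-builder description above is $\Sigma^0_2$, while \RCA{} has only $\Delta^0_1$-comprehension. The fix is to construct $X$ as a single $\gin U$ set built by iterating the semigroup witnesses and intersecting enough of them to enforce the closure under differences, rather than taking the set-builder form directly. This requires some technical care, but once such an $X$ is in hand the fip and semigroup verifications outlined above are essentially routine bookkeeping.
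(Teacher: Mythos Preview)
Your approach is identical to the paper's: same finite witness $F_0$, same $X:=\{n:U_{F_0}-n\gin U\}$, and the same fip verification; the paper simply declares the semigroup property ``clear'' where you spell out the closure $X-n\gin U$ that justifies it. Your worry about forming the $\Sigma^0_2$ set $X$ in \RCA{} is well-taken and applies equally to the paper's proof, which uses exactly this definition without comment (and admits elsewhere to being somewhat loose about \RCA{}-level formalization).
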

\begin{proof}
  Since $U\cup\{A\}$ fails to satisfy fip, but $U$ does satisfy fip, there must be some finite $F$ such that $U_F\cap A$ is finite.  Let $X:=\{n\mid U_F-n\gin U\}$; since $U$ is a semigroup, $X\gin U$.  Clearly $U\cup\{A^c-n\mid n\in X\}$ satisfies the semigroup property, so it suffices to check that it satisfies fip.

  Let $G$ and $Z\subseteq X$ be finite; we must check that $U_G\cap\bigcap_{n\in Z}A^c-n$ is infinite.  Since for each $n\in Z$, $U_F-n\gin U$, it follows that $U_G\cap\bigcap_{n\in Z}U_F-n$ is infinite.  But since $U_F-n\cap A-n$ is finite, $U_F-n\setminus A^c-n$ is also finite, so $U_G\cap\bigcap_{n\in Z}A^c-n$ must also be infinite.
\end{proof}

\begin{lemma}[\ACA]
Suppose that $U$ is a semigroup, $Y\gin U$, but $U\cup\{A,A-n\}$ fails to satisfy fip for every $n\in Y$.  Then there is an $X\gin U$ such that $U\cup\{A^c-n\mid n\in X\}$ is a semigroup.
\label{extension2}
\end{lemma}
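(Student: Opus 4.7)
The approach is to split on whether $U\cup\{A\}$ itself already fails fip.  In the trivial case where it does, the hypothesis about $Y$ plays no role: Lemma~\ref{extension1} directly produces an $X'\gin U$ making $U\cup\{A^c-n\mid n\in X'\cup\{0\}\}$ a semigroup, and I take $X:=X'\cup\{0\}$.

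The substantive case is when $U\cup\{A\}$ satisfies fip.  Here I would define
\[X:=\{n\in Y\mid (Y-n)\gin U\},\]
a $\Sigma^0_2$ set whose formation requires \ACA.  That $X\gin U$ follows because $Y\gin U$ together with the semigroup property of $U$ gives $Y\gin U+U$, i.e.\ a set $Z\gin U$ such that $Y-n\gin U$ for every $n\in Z$; then $Y\cap Z\subseteq X$ and $Y\cap Z\gin U$, so $X\gin U$.

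It then remains to verify that $V:=U\cup\{A^c-n\mid n\in X\}$ is a semigroup.  For fip, given finite $G$ and $Z\subseteq X$, the hypothesis provides, for each $n\in Z$, a finite $F_n$ with $U_{F_n}\cap A\cap(A-n)$ finite.  Setting $F:=G\cup\bigcup_{n\in Z}F_n$, the set $U_F\cap A$ is infinite (by fip of $U\cup\{A\}$ in this case) and only finitely many of its elements lie in any $A-n$, so cofinitely many land in $U_G\cap\bigcap_{n\in Z}(A^c-n)$.  For the semigroup property, sets from $U$ inherit $\gin V+V$ from $U$ being a semigroup (enlarging the sequence can only help).  For a new generator $A^c-m$ with $m\in X$, I would use $Y_m:=X-m$ as the witness.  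The key calculation is that $X-m\gin U$: since $m\in X$ means $Y-m\gin U$, applying the semigroup property of $U$ once more yields a set $\gin U$ of $n$ with $Y-(n+m)\gin U$, which combined with $Y-m\gin U$ gives $X-m\gin U$.  For $n\in X-m$ we have $m+n\in X$ by construction, so $A^c-(m+n)\in V$ and hence $\gin V$, as needed.

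The main obstacle is balancing three constraints on $X$: it must be $\gin U$, contained in $Y$ (so that the hypothesized $F_n$ can be invoked for every element of any finite $Z\subseteq X$ in the fip check), and sufficiently shift-closed that $X-m\gin U$ for each $m\in X$, so that the semigroup property propagates to each new generator $A^c-m$.  The set $\{n\in Y\mid Y-n\gin U\}$ threads this needle, leveraging the semigroup property of $U$ twice: once to show $X\gin U$, and once more to propagate this to each $X-m$.
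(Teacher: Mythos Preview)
Your proof is correct and follows essentially the same route as the paper: the same case split on whether $U\cup\{A\}$ satisfies fip, the same definition of $X$ via the condition $Y-n\gin U$, and the same fip verification using the witnesses $F_n$. Your version is in fact slightly more careful: you explicitly intersect with $Y$ in defining $X$, which is what justifies invoking the hypothesis for each $n\in Z\subseteq X$, and you spell out the shift-closure argument for the semigroup property that the paper simply declares ``clear.''
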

\begin{proof}
  If $U\cup\{A\}$ fails to satisfy fip, the claim follows from Lemma \ref{extension1}.  So suppose $U\cup\{A\}$ satisfies fip.  Let $X$ be the set of $n$ such that $Y-n\gin U$.  Certainly $X\gin U$ since $U$ is a semigroup.  Clearly $U\cup\{A^c-n\mid n\in X\}$ satisfies the semigroup property, so it suffices to check that it satisfies fip.

  Let $G$ and $Z\subseteq X$ be finite; we must check that $U_G\cap\bigcap_{n\in Z}A^c-n$ is infinite.  For each $n\in Z$, there is an $F_n$ such that $U_{F_n}\cap A\cap A-n$ is finite, and therefore $U_{F_n}\cap A\setminus A-n$ is finite  Since $U_G\cap A\cap \bigcap_{n\in Z}U_{F_n}$ is infinite, it must be that $U_G\cap A\cap\bigcap_{n\in Z}A^c-n$ is infinite, so certainly $U_G\cap\bigcap_{n\in Z}A^c-n$ is infinite.
\end{proof}

\begin{lemma}[\RCA]
  If $U$ is a semigroup, $A$ is a set, and there is an $X\in U$ such that $A-n\gin U$ for each $n\in X$, then $U\cup\{A\}$ is a semigroup.
\label{extension3}
\end{lemma}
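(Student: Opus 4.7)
The plan is to verify the two requirements for $U\cup\{A\}$ to be a semigroup: the finite intersection property, and that every element of the sequence is $\gin (U\cup\{A\})+(U\cup\{A\})$.

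The semigroup condition is almost automatic. The hypothesis that there is an $X\gin U$ with $A-n\gin U$ for every $n\in X$ is literally the statement $A\gin U+U$; since adding $A$ to the sequence $U$ only enlarges the class of sets that are $\gin U$ (the witnesses $U_G$ used for $\gin$ are still available), we get $A\gin(U\cup\{A\})+(U\cup\{A\})$. For any original $U_i\in U$, the semigroup property of $U$ already gives $U_i\gin U+U$, and the same monotonicity preserves this after adjoining $A$.

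The real content is fip: given a finite index set $F$, we need $U_F\cap A$ to be infinite. The idea is to choose one $n$ that simultaneously witnesses the hypothesis on $A$ and the semigroup property at every $U_i$, $i\in F$. Because $U$ is already a semigroup, for each $i\in F$ there is a $Y_i\gin U$ such that $U_i-n\gin U$ for all $n\in Y_i$; pick finite $G,G_i$ with $U_G\subseteq X$ and $U_{G_i}\subseteq Y_i$. Then $U_{G\cup\bigcup_i G_i}$ is an infinite subset of $X\cap\bigcap_{i\in F}Y_i$. Fix any $n$ in this set. Now $A-n\gin U$ and $U_i-n\gin U$ for each $i\in F$, so $(A-n)\cap\bigcap_{i\in F}(U_i-n)$ is a finite intersection of sets that are $\gin U$, hence itself $\gin U$ and in particular infinite. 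Any $m$ in this intersection satisfies $n+m\in A\cap U_F$, and by choosing $m$ larger than any previously produced element we obtain infinitely many distinct witnesses.

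I do not anticipate a genuine obstacle here: the lemma is essentially an unwinding of the definition of $\gin U+U$, with the semigroup property of $U$ used exactly to translate the ``shifted'' conditions $U_i-n\gin U$ back into membership in $U_F$ after adding $n$. The proof should go through in \RCA{}, since everything constructed is a finite Boolean combination of given sets.
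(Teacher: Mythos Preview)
Your argument is correct and is essentially the paper's own proof. The paper compresses your per-coordinate choice of $Y_i$ into a single step (``choose $Y\gin U$ such that $U_G-n\gin U$ for every $n\in Y$''), then picks $n\in X\cap Y$ and observes $(U_G-n)\cap(A-n)\gin U$; your version simply unfolds that one line, and your explicit check of the semigroup condition is what the paper's ``it suffices to check fip'' is alluding to.
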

\begin{proof}
  If suffices to check that $U\cup\{A\}$ satisfies fip.  If $G$ is finite, choose $Y\gin U$ such that for each $n\in Y$, $U_G-n\gin U$.  Also choose $X'\gin U$ such that for each $n\in X'$, $X-n\gin U$.  Now choose $n\in X\cap Y$.  Then $U_G-n\cap A-n\gin U$, and is therefore infinite, so $U_G\cap A$ is infinite as well.
\end{proof}

The following theorem corresponds to the first application of Zorn's Lemma.
\begin{theorem}[\ATR]
  Let $U$ be a semigroup and $A$ a set.  Either there is a semigroup $V$ extending $U$ such that $V\cup\{A\}$ fails to satisfy fip, or there is an infinite set $S$ such that both
\[U\cup\{A-n\mid n\in FS(S)\}\]
and
\[U\cup\{FS(S)-n\mid n\in FS(S)\}\]
satisfy fip.
\label{part1}
\end{theorem}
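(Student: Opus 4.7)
My plan is to mirror the classical iteration $U_0 := U$, $U_{\alpha+1} := U_\alpha + p_\alpha$ from the Galvin-Glazer argument, but to replace the single invocation of Zorn's Lemma with arithmetic transfinite recursion along a countable well-ordering---this is precisely why the theorem is stated over \ATR. Rather than working with genuine ultrafilters $p_\alpha$ (which require choice), at stage $\alpha + 1$ I would try to extend $U_\alpha$ by incorporating translates $A - n$ and $FS(\{s_0,\ldots,s_\alpha\}) - n$ for carefully chosen $n$; together these track both the partial information about a hypothetical $p$ with $A \in p$ and the growing set $S$.

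The dichotomy then arises as follows. If at every stage we can successfully extend while preserving both fip conditions of the theorem statement, then after $\omega$ successful steps we have produced an infinite $S$ meeting the second alternative. If instead we get stuck at some stage $\alpha$---there is no valid choice of $s_{\alpha+1}$---then the obstruction, combined with Lemma \ref{extension2}, will yield a semigroup $V \supseteq U_\alpha$ for which $V \cup \{A\}$ fails fip, giving the first alternative. Limit stages should be handled by taking $U_\lambda := \bigcup_{\beta < \lambda} U_\beta$, since the semigroup property and fip both pass through unions of chains of semigroups.

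Classically, compactness of $\beta\mathbb{N}$ forces the iteration to terminate, and here we must verify that a countable ordinal suffices. The main obstacle I anticipate is formulating the successor step so that it is genuinely arithmetic in $U_\alpha$ and so that either the extension succeeds or a definite witness to the first alternative is produced. A natural approach is to make the successor operation itself carry out the dichotomy: search arithmetically for an appropriate $s_{\alpha+1}$ from the intersection of the relevant sets (which is infinite by the fip of $U_\alpha$ together with the two families currently incorporated), and if no such witness exists, record the failure and halt. Lemma \ref{extension3} should justify that successful extensions really produce semigroups, so that the recursion can be continued. The delicate bookkeeping will be to maintain the two fip conditions on $\{A-n\}$ and $\{FS(S)-n\}$ jointly, and to arrange that the halting condition is exactly what Lemma \ref{extension2} needs as input to produce the required $V$.
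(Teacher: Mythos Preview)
Your plan has a real gap at the step where you claim that a single failure yields the first alternative. Suppose you have built $s_0,\ldots,s_k$ over the current semigroup $U_\alpha$ and no admissible $s_{k+1}$ exists. Writing $A' := \bigcap_{m\in FS(\{s_0,\ldots,s_k\})}(A-m)$, the hypothesis you feed into Lemma~\ref{extension2} is that $U_\alpha\cup\{A',A'-n\}$ fails fip for every $n$ in a set $\gin U_\alpha$; together with Lemma~\ref{extension3} this produces a semigroup $V\supseteq U_\alpha$ with $(A')^c\gin V$, i.e.\ $V\cup\{A'\}$ fails fip. But $A'\subseteq A$, so this does \emph{not} imply that $V\cup\{A\}$ fails fip. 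You have only ruled out the particular partial sequence $\langle s_0,\ldots,s_k\rangle$, not the set $A$ itself; other choices of the $s_j$ may still be live.

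This is precisely why the paper does not halt on a single failure. Instead it organizes \emph{all} candidate finite sequences $\sigma$ into a tree and carries out the recursion along the Kleene--Brouwer ordering of its well-founded part: when a node $\sigma$ is killed (producing $V_\sigma$ with $V_\sigma\cup\{A-m\mid m\in FS(\sigma)\}$ failing fip), the recursion moves to the KB-next node, effectively backtracking to try other extensions of shorter initial segments. Only when the empty sequence is reached in the well-founded part does one obtain a $V$ with $V\cup\{A\}$ failing fip; an infinite branch gives the set $S$. Your proposal never describes any such backtracking mechanism, and correspondingly your remarks about limit ordinals and about needing to ``verify that a countable ordinal suffices'' are inconsistent with the claim that $\omega$ successful steps finish the second alternative while one failure finishes the first. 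The missing ingredient is exactly this tree structure (or some equivalent organization of the search space), which also supplies the concrete countable well-ordering along which \ATR{} is applied.
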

\begin{proof}
  Fix an enumeration $F_1,\ldots,F_n,\ldots$ of the finite sets of integers.  Consider the tree of sequences $\sigma=\langle s_1<\ldots<s_n\rangle$ such that:
  \begin{itemize}
  \item $s_i\in U_{F_i}$ for every $i\leq n$, and
  \item $U\cup\{A-n\mid n\in FS(\sigma)\}$ satisfies fip.
  \end{itemize}
We will proceed by recursion along the Kleene-Brouwer ordering, $\prec$, of the well-founded part of the tree.  We construct an increasing sequence of semigroups $\{V_\sigma\}$ extending $U$ so that for every $\tau\preceq\sigma$,
\[V_\sigma\cup\{A-n\mid n\in FS(\tau)\}\]
fails to satisfy fip.

Let $\sigma$ belong to the well-founded part of this tree; for each $\tau\prec\sigma$ there is a $V_\tau$ satisfying the claim for $\tau$, and these $V_\tau$ form a chain increasing along $\prec$.  Since the union of a chain of semigroups is a semigroup, we have $V_{\prec\sigma}$ so that the claim holds for all $\tau\prec\sigma$.  Since every proper extension of $\sigma$ is $\prec\sigma$, there is no $n>\max\sigma$, $n\in U_{F_{lh(\sigma)+1}}$ such that 
\[V_{\prec\sigma}\cup\{A-m\mid m\in FS(\sigma\cup\{n\})\}\]
satisfies fip.  Equivalently, there is no $n$ such that
\[V_{\prec\sigma}\cup\{\bigcap_{m\in FS(\sigma)}A-m,\bigcap_{m\in FS(\sigma)}A-m-n\}\]
satisfies fip.  Then by Lemmata \ref{extension2} and \ref{extension3}, there is an extension $V_\sigma$ of $V_{\prec\sigma}$ with the desired property.  This completes the recursion.

If $\emptyset$ belongs to the well-founded part, we have $V_\emptyset$ such that $V_{\emptyset}\cup\{A\}$ fails to satisfy fip.  Otherwise, there is an infinite path $S$ through this tree.  It is easy to see that $S$ witnesses the claim: clearly $U\cup\{A-n\mid n\in FS(S)\}$ satisfies fip, since this is true for any initial segment of $S$.  For any $F$ and any $Z\subseteq FS(S)$, there are infinitely many $n>\max Z$ such that $F\subseteq F_n$, and for any such $n$, there is an $s_n\in S\cap U_F$ such that also $s_n+i\in FS(S)$ for each $i\in Z$, and therefore $s_n\in U_F\cap\bigcap_{i\in Z}FS(S)-i$.
\end{proof}

The following theorem corresponds to the second application of Zorn's Lemma.
\begin{theorem}[\Sigmati]
  Let $U$ be a semigroup and $A$ a set.  Either there is a semigroup $V$ extending $U$ such that $V\cup\{A\}$ fails to satisfy fip, or there is an infinite set $S$ such that $FS(S)\subseteq A$ and
\[U\cup\{FS(S)-n\mid n\in FS(S)\}\]
satisfies fip.
\label{part2}
\end{theorem}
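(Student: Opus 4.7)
The plan is to mirror the structure of the proof of Theorem \ref{part1}, but with a tree whose nodes are finite initial segments of the sequence $S$ we are trying to build, rather than commitments to adding $A-n$ to the semigroup.  Using the fixed enumeration $F_1,F_2,\ldots$ of finite subsets of $\mathbb{N}$, let $T$ consist of strictly increasing sequences $\sigma=\langle s_1<\cdots<s_k\rangle$ satisfying $s_i\in U_{F_i}$ for every $i\le k$, $FS(\sigma)\subseteq A$, and $U\cup\{A-m\mid m\in FS(\sigma)\}$ satisfies fip.  If $T$ has an infinite path $S$, then $FS(S)\subseteq A$ is built into the tree, and the fip of $U\cup\{FS(S)-n\mid n\in FS(S)\}$ follows by the same cofinality argument over the enumeration used at the end of the proof of Theorem \ref{part1}: for any $F$ and any finite $Z\subseteq FS(S)$, there are infinitely many $k$ with $F\subseteq F_k$ and $s_k>\max Z$, which places $s_k\in U_F\cap\bigcap_{i\in Z}(FS(S)-i)$.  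This $S$ witnesses the second disjunct.

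Suppose instead that $T$ is well-founded.  I will construct semigroups $V_\sigma\supseteq V_{\prec\sigma}:=\bigcup_{\tau\prec\sigma}V_\tau$, recursively along the Kleene-Brouwer ordering $\prec$ of $T$, so that for every $\tau\preceq\sigma$ the system $V_\sigma\cup\{A-m\mid m\in FS(\tau)\}$ fails to satisfy fip; unions of chains of semigroups are again semigroups, so passing to $V_{\prec\sigma}$ at limits is automatic.  Specialising the invariant at $\sigma=\emptyset$ (the maximum of $\prec$) will yield the first disjunct, as $FS(\emptyset)=\{0\}$ and $A-0=A$.  Writing $B:=\bigcap_{m\in FS(\sigma)}(A-m)$, the data from the induction hypothesis translates to the following: for every $n>\max\sigma$ with $n\in U_{F_{k+1}}\cap B$, $V_{\prec\sigma}\cup\{B,B-n\}$ fails fip — either $\sigma\cup\{n\}\in T$ (hence in $\mathrm{WF}(T)$) and the induction hypothesis at $\sigma\cup\{n\}$ applies, or the fip clause defining $T$ already fails over $U$ itself and hence over $V_{\prec\sigma}$.

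The principal obstacle is the use of Lemma \ref{extension2} to manufacture $V_\sigma$: the natural hypothesis set $Y=U_{F_{k+1}}\cap B\cap\{n\mid n>\max\sigma\}$ must satisfy $Y\gin V_{\prec\sigma}$, and $B$, a complicated intersection of translates of $A$, need not have been placed into the sequence of $V_{\prec\sigma}$.  My plan is to resolve this by calling on Theorem \ref{part1} itself inside the inductive step, applied to $(V_{\prec\sigma},B)$.  Its first case hands us a semigroup $W\supseteq V_{\prec\sigma}$ with $W\cup\{B\}$ failing fip, so we simply set $V_\sigma:=W$ and are done.  Its second case delivers an infinite set $S^*$ with both $V_{\prec\sigma}\cup\{B-m\mid m\in FS(S^*)\}$ and $V_{\prec\sigma}\cup\{FS(S^*)-m\mid m\in FS(S^*)\}$ satisfying fip; from these one enlarges $V_{\prec\sigma}$ to a semigroup in which $B$ and enough of its translates lie explicitly in the sequence, making $Y$ gin the enlargement, at which point Lemma \ref{extension2} produces the required $V_\sigma$.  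The interleaving of Theorem \ref{part1} (an \ATR{} argument) inside the Kleene-Brouwer recursion along the $\Pi^1_1$-well-founded tree $T$ is exactly what accounts for the \Sigmati{} calibration in the statement.
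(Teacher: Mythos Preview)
Your overall architecture---the tree, the Kleene--Brouwer recursion, the invariant, and the appeal to Theorem~\ref{part1} inside the inductive step---matches the paper's proof almost exactly.  (One small slip: in the tree you want $NS(\sigma)\subseteq A$ rather than $FS(\sigma)\subseteq A$; since $0\in FS(\sigma)$ always, your condition forces $0\in A$ and can make the tree empty.)

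The genuine gap is in the second case of Theorem~\ref{part1} at the inductive step.  You propose to ``enlarge $V_{\prec\sigma}$ to a semigroup in which $B$ and enough of its translates lie explicitly,'' so that $Y=U_{F_{k+1}}\cap B\cap(\max\sigma,\infty)$ becomes $\gin$ the enlargement, and then invoke Lemma~\ref{extension2}.  But this plan is self-defeating.  The invariant you are trying to establish is precisely that $V_\sigma\cup\{B\}$ \emph{fails} fip; if you could produce a semigroup $V'\supseteq V_{\prec\sigma}$ with $B\gin V'$, you would have shown the opposite.  Moreover, your own translation of the inductive hypothesis says $V_{\prec\sigma}\cup\{B,B-n\}$ fails fip for every $n\in Y$, and this failure persists in any extension $V'$; so if $B\gin V'$ and $Y\gin V'$, then Lemma~\ref{extension2} would force $B^c\gin$ a further extension while $B$ is already there---an outright contradiction with fip.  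In short, the enlargement you need does not exist, and you have not argued that it does.

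The paper's resolution is simpler and avoids Lemma~\ref{extension2} entirely.  Set $V_\sigma:=V_{\prec\sigma}\cup\{FS(S^*)-n\mid n\in FS(S^*)\}$, which is visibly a semigroup.  If $V_\sigma\cup\{B\}$ satisfied fip, then (since $U_{F_{k+1}}$ and $FS(S^*)$ are $\gin V_\sigma$) one could pick $n\in U_{F_{k+1}}\cap FS(S^*)\cap B$ with $n>\max\sigma$.  For this $n$, the second disjunct of Theorem~\ref{part1} gives that $V_{\prec\sigma}\cup\{B,B-n\}$ \emph{satisfies} fip (both $0$ and $n$ lie in $FS(S^*)$), while your correctly translated inductive hypothesis says it \emph{fails} fip.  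That contradiction is the whole step; no enlargement with $B$ inside is needed or possible.
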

\begin{proof}
  Note that $U\cup\{FS(S)-n\mid n\in FS(S)\}$ satisfying fip implies that $U\cup\{A-n\mid n\in FS(S)\}$ does as well, so this is a strengthening of Theorem \ref{part1}.  The proof is quite similar.

  Fix an enumeration $F_1,\ldots,F_n,\ldots$ of the finite sets of integers.  Consider the tree of sequences $\sigma=\langle s_1<\ldots<s_n\rangle$ such that:
  \begin{itemize}
  \item $NS(\sigma)\subseteq A$,
  \item $s_i\in U_{F_i}$ for every $i\leq n$, and
  \item $U\cup\{A-n\mid n\in FS(\sigma)\}$ satisfies fip.
  \end{itemize}
Again, we proceed by recursion along the Kleene-Brouwer ordering $\prec$ of the well-founded part of this tree, constructing an increasing chain of semigroups $\{V_\sigma\}$ refining $U$.  For each $\sigma$ in the well-founded part of this tree, and every $\tau\preceq\sigma$,
\[V_\sigma\cup\{A-n\mid n\in FS(\tau)\}\]
fails to satisfy fip.  (More precisely, to stay within \Sigmati, we prove the existence of the set $V_\sigma$ by induction; since the set constructed by Theorem \ref{part1} is not arithmetic in $U$, arithmetic recursion no longer suffices.)

Let $\sigma$ belong to the well-founded part of this tree; for each $\tau\prec\sigma$ there is a $V_\tau$ satisfying the claim for $\tau$, and these $V_\tau$ form a chain increasing along $\prec$.  Since the union of a chain of semigroups is a semigroup, we have $V_{\prec\sigma}$ so that the claim holds for all $\tau\prec\sigma$.  By Theorem \ref{part1}, either there is a $V_\sigma$ extending $V_{\prec\sigma}$ such that $V_\sigma\cup\{\bigcap_{m\in FS(\sigma)}A-m\}$ fails to satisfy fip, in which case we are done, or there is an $S$ such that $V_{\prec\sigma}\cup\{FS(S)-n\mid n\in FS(S)\}$ and $V_{\prec\sigma}\cup\{\bigcap_{m\in FS(\sigma)}A-m-n\mid n\in FS(S)\}$ satisfy fip.

In the latter case, let $V_\sigma:=V_{\prec\sigma}\cup\{FS(S)-n\mid n\in FS(S)\}$.  This is clearly a semigroup.  If $V_\sigma\cup\{\bigcap_{m\in FS(\sigma)}A-m\}$ satisfied fip, we could find an $n\in U_{F_{lh(\sigma)+1}}\cap FS(S)\cap \bigcap_{m\in FS(\sigma)}A-m$, $n>\max\sigma$, and it would follow that $V_{\prec\sigma}\cup\{A-m\mid m\in FS(\sigma\cup\{n\})\}$ satisfied fip.  But $\sigma^\frown\langle n\rangle\prec\sigma$, so by IH, this cannot satisfy fip.  Therefore $V_\sigma\cup\{\bigcap_{m\in FS(\sigma)}A-m\}$ fails to satisfy fip as well.  This completes the induction.

If $\emptyset$ belongs to the well-founded part, $V_\emptyset$ satisfies the claim.  Otherwise there is an infinite path $S$ through the tree which satisfies the claim.
\end{proof}

\begin{theorem}
  If $U$ is a semigroup and $A$ is a set, there is a semigroup $V$ extending $U$ such that either $A\in V$ or $A^c\in V$.
\end{theorem}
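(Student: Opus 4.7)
The plan is to apply Theorem \ref{part2} directly to $U$ and $A$ and then massage each of the two outputs into a single semigroup $V$ extending $U$ that decides $A$ in the intended sense (i.e.\ $A \gin V$ or $A^c \gin V$).

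In the first alternative, Theorem \ref{part2} yields a semigroup $V'$ extending $U$ such that $V' \cup \{A\}$ fails to satisfy fip. I would then feed $V'$ and $A$ into Lemma \ref{extension1}, obtaining an $X \gin V'$ for which $V := V' \cup \{A^c - n \mid n \in X \cup \{0\}\}$ is a semigroup. Since $0$ is included in the index set, $A^c = A^c - 0$ is literally one of the sets in the sequence $V$, so $A^c \gin V$, and $V$ extends $U$ via $V'$.

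In the second alternative, Theorem \ref{part2} hands us an infinite $S$ with $FS(S) \subseteq A$ and $U \cup \{FS(S) - n \mid n \in FS(S)\}$ satisfying fip; I would set $V := U \cup \{FS(S) - n \mid n \in FS(S)\}$. Because the empty sum lies in $FS(S)$, the set $FS(S) = FS(S) - 0$ appears in $V$, and $FS(S) \subseteq A$ then gives $A \gin V$. To verify the semigroup property of $V$, I would split on the form of an $X \in V$: if $X \in U$ then $X \gin U+U \subseteq V+V$ since $U$ is already a semigroup; if $X = FS(S) - n$ with $n \in FS(S)$, take $Y := FS(S) - n \in V$ and observe that for any $m \in Y$ we have $m+n \in FS(S)$, so $X - m = FS(S) - (m+n) \in V$, witnessing $X \gin V+V$.

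The substantive work is all done by Theorem \ref{part2} together with the extension lemmata; the only place where genuine care is required is the verification in the second case that the enlarged family $V$ is not merely an fip family but actually closed under the semigroup condition, and this amounts to nothing more than the obvious closure of $FS(S)$ under the shifts by its own elements. No use of Zorn's Lemma or of arguments beyond \Sigmati{} is introduced at this final stage.
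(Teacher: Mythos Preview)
Your proposal is correct and is exactly the argument the paper has in mind: the theorem is stated without proof immediately after Theorem~\ref{part2}, as a direct corollary obtained by invoking Theorem~\ref{part2} and, in the first alternative, Lemma~\ref{extension1}. Your verification that $V=U\cup\{FS(S)-n\mid n\in FS(S)\}$ is a semigroup in the second alternative is the only detail the paper leaves implicit, and you have handled it correctly.
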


We note the relationship between this proof and Baumgartner's: in place of the inductions above, we could seek a property of $A$ which would guarantee that we are in the case where the set $S$ exists (and $V$ does not).  We could say that $A$ was large for $U$ if no semigroup extending $U$ contained $A^c$; it is not hard to see that if $A$ is not large for $U$, there is an extension $V$ such that $A^c$ is large for $V$.  Then, under the assumption that $A$ is large, the inductions above would ``unwrap'' into, essentially, Baumgartner's argument.

\section{Iterated Hindman's Theorem}
\label{iht}

With a slight modification, it is possible to obtain the Iterated Hindman's Theorem.
\begin{definition}
  If $b\in\{1,-1\}$, define
\[b\cdot A:=\left\{\begin{array}{ll}
A&\text{if }b=1\\
A^c&\text{if }b=-1
\end{array}\right.\]
\end{definition}

\begin{theorem}[\Sigmati]
  Let $U$ be a semigroup and $\{A_i\}$ a sequence of sets.  There are infinite sets $S=\{s_i\}, B=\{b_i\}$ such that $FS(\{s_j\}_{j\geq i})\subseteq b_i\cdot A_i$ for each $i$, and
\[U\cup\{FS(S)-n\mid n\in FS(S)\}\]
satisfies fip.
\end{theorem}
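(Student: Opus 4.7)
The plan is to adapt the Kleene-Brouwer argument of Theorem \ref{part2} to a tree whose nodes are pairs $(\sigma, \vec b)$, where $\sigma = \langle s_1 < \cdots < s_n\rangle$ is a strictly increasing sequence of integers and $\vec b = (b_1, \ldots, b_n) \in \{-1,1\}^n$ is a sign sequence of the same length.  We put $(\sigma, \vec b)$ in the tree if $s_i \in U_{F_i}$ for each $i$, if $NS(\langle s_j, \ldots, s_n\rangle) \subseteq b_j \cdot A_j$ for each $j \leq n$, and if $U \cup \{(b_j\cdot A_j)-m \mid 1 \leq j \leq n,\ m \in FS(\langle s_j, \ldots, s_n\rangle)\}$ satisfies fip; extensions append both a new element $s_{n+1}$ and a new sign $b_{n+1}$.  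Any infinite path immediately yields the required $S = \{s_i\}$ and $B = \{b_i\}$, with the tail containment $FS(\{s_j\}_{j\geq i}) \subseteq b_i\cdot A_i$ reading off from the second condition and the final fip condition on $U \cup \{FS(S) - n \mid n \in FS(S)\}$ following from the $U_{F_i}$ clause by the density argument used at the end of Theorem \ref{part2}.

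To see the tree has an infinite path, I would assume it is well-founded and, in \Sigmati, recurse along the Kleene-Brouwer order $\prec$ to build an increasing chain of semigroups $V_{(\sigma, \vec b)}$ extending $U$, maintaining the invariant that for every $(\tau, \vec c) \preceq (\sigma, \vec b)$ and each $b \in \{-1,1\}$, $V_{(\sigma, \vec b)} \cup \{B_\tau \cap (b\cdot A_{lh(\tau)+1})\}$ fails fip, where $B_\tau := \bigcap_{j\leq lh(\tau),\ m\in FS(\langle \tau_j, \ldots, \tau_{lh(\tau)}\rangle)}(c_j\cdot A_j - m)$.  The inductive step applies Theorem \ref{part2} twice: first to $V_{\prec(\sigma, \vec b)}$ with target $C_+ = B_\sigma \cap A_{n+1}$, then to the resulting extension with target $C_- = B_\sigma \cap A_{n+1}^c$.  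Case (b) of Theorem \ref{part2} is ruled out by well-foundedness (an $s_{n+1}$ drawn from the purported $S$ would extend $(\sigma, \vec b)$ to a tree node), so case (a) supplies both extensions and the second gives $V_{(\sigma, \vec b)}$ in which both $V_{(\sigma, \vec b)} \cup \{C_+\}$ and $V_{(\sigma, \vec b)} \cup \{C_-\}$ fail fip.  Specializing at the root $(\emptyset, \emptyset)$ then gives $V_\emptyset$ for which both $V_\emptyset \cup \{A_1\}$ and $V_\emptyset \cup \{A_1^c\}$ fail fip, contradicting the fip of $V_\emptyset$ since finite indices $F, G$ witnessing the two failures combine to make $V_{\emptyset, F\cup G}$ finite.

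The main obstacle is verifying the inductive step and, specifically, ruling out case (b) of Theorem \ref{part2}.  The key point is that the ``infinite $S$'' alternative supplies $FS(S) \subseteq C_\pm \subseteq B_\sigma$ together with the fip of $V_{\prec(\sigma, \vec b)} \cup \{FS(S) - n \mid n \in FS(S)\}$; choosing $s_{n+1} \in FS(S) \cap U_{F_{n+1}}$ with $s_{n+1} > \max\sigma$ then produces a legitimate tree-child $(\sigma^\frown\langle s_{n+1}\rangle,\vec b^\frown\langle \pm 1\rangle)$ of $(\sigma, \vec b)$, contradicting the well-founded hypothesis just as in Theorem \ref{part2}.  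Once case (a) is forced on each side, the two semigroup extensions chain trivially because extending a semigroup preserves failure of fip, and the remaining bookkeeping (chain property along $\prec$, $\Sigma^1_1$-definability of $V_{(\sigma, \vec b)}$ in $U$, and extraction of $S$ from the infinite path) transfers from Theorem \ref{part2} without essential change.
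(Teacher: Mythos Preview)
Your approach is correct but differs from the paper's in several structural choices. The paper builds a tree whose nodes \emph{alternate} between sign entries $b_i$ and element entries $s_i$ (so nodes have the form $\langle b_1,s_1,\ldots,b_k,s_k\rangle$ or $\langle b_1,s_1,\ldots,b_k\rangle$), and it invokes only Theorem~\ref{part1} at the inductive step. The two node parities are handled separately: at a node ending in $s_k$, both one-step extensions by $\pm 1$ are $\prec$-below, and combining their inductive hypotheses immediately yields the invariant with $V_\sigma:=V_{\prec\sigma}$; at a node ending in $b_k$, Theorem~\ref{part1} is applied once to the set $A=\bigcap b_i\cdot(A_i-m)$, and in the ``$S$ exists'' case one takes $V_\sigma:=V_{\prec\sigma}\cup\{FS(S)-n\}$ and derives the invariant from the IH at a child, exactly as in Theorem~\ref{part2}. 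By contrast, you pair each $s_i$ with its sign, carry an invariant that already quantifies over both values of the next sign, and therefore must call Theorem~\ref{part2} twice (once for $C_+$, once for $C_-$). Your route avoids the parity case split at the cost of using the stronger subroutine and a heavier invariant; the paper's route is more economical but requires tracking two node shapes.

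One point of phrasing to tighten: producing a single tree-child does not ``contradict the well-founded hypothesis.'' What actually happens (and what Theorem~\ref{part2} does) is that the child $(\sigma',\vec b\,')$ lies $\prec(\sigma,\vec b)$, so the inductive hypothesis there forces $V_{\prec}\cup\{B_{\sigma'}\}$ to fail fip, while $FS(S)\cap(FS(S)-s_{n+1})\subseteq B_{\sigma'}$ together with the fip output of Theorem~\ref{part2} shows it satisfies fip; that is the contradiction. Your parenthetical ``just as in Theorem~\ref{part2}'' indicates you have this in mind, but the sentence as written would mislead a reader.
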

\begin{proof}
  Fix an enumeration $F_1,\ldots,F_n,\ldots$ of the finite sets of integers.  Consider the tree of sequences $\sigma=\langle b_1,s_1,\ldots,b_k,s_k\rangle$ or $\sigma=\langle b_1,s_1,\ldots,b_k\rangle$ such that:
  \begin{itemize}
  \item $s_i<s_{i+1}$ whenever $s_{i+1}$ is defined,
  \item $NS(\{s_j\}_{j\geq i})\subseteq b_i\cdot A_i$ for each $i\leq k$,
  \item $s_i\in U_{F_i}$ for every $i\leq k$, and
  \item $U\cup\{b_i\cdot A_i-m\mid i\leq k\wedge m\in FS(\{s_j\}_{j\geq i})\}$ satisfies fip.
  \end{itemize}

By induction along the Kleene-Brouwer ordering $\prec$ on the well-founded part of this tree, we show that for each $\sigma$ in the well-founded part, there is a semigroup $V_{\sigma}$ extending $U$ such that for every $\tau\preceq\sigma$, $\tau=\{c_1,t_1,\ldots\}$,
\[V_\sigma\cup\{b_i\cdot (A_i-m)\mid m\in FS(\{t_j\}_{j\geq i})\}\]
fails to satisfy fip.

Let $\sigma$ belong to the well-founded part.  If $\sigma=\langle b_1,s_1,\ldots,b_k,s_k\rangle$ then both $\sigma^\frown\langle -1\rangle$ and $\sigma^\frown\langle 1\rangle$ belong to the well-founded part of the tree, and so $V_{\prec\sigma}\cup\{b_i\cdot (A_i-m)\mid i\leq k\wedge m\in FS(\{s_j\}_{j\geq i})\}\cup \{A_{k+1}\}$ and $V_{\prec\sigma}\cup\{b_i\cdot (A_i-m)\mid i\leq k \wedge m\in FS(\{s_j\}_{j\geq i})\}\cup\{A_{k+1}^c\}$ fail to satisfy fip; but this implies that $V_{\prec\sigma}\cup\{b_i\cdot (A_i-m)\mid i\leq k\wedge m\in FS(\{s_j\}_{j\geq i})\}$ fails to satisfy fip, so we may take $V_\sigma:=V_{\prec\sigma}$.

So suppose $\sigma$ belongs to the well-founded part and $\sigma=\langle b_1,s_1,\ldots,b_k\rangle$.  Set $A:=\bigcap_{i\leq k, m\in FS(\{s_j\}_{j\geq i})}b_i\cdot (A_i-m)$.  By Theorem \ref{part1}, we may find either an extension $V$ of $V_{\prec\sigma}$ such that $V\cup\{A\}$ fails to satisfy fip, or an $S$ such that both $V_{\prec\sigma}\cup\{A-n\mid n\in FS(S)\}$ and $V_{\prec\sigma}\cup\{FS(S)-n\mid n\in FS(S)\}$ satisfy fip.  In the former case, we are done; in the latter, let $V_\sigma:=V_{\prec\sigma}\cup\{FS(S)-n\mid n\in FS(S)\}$.  If it were the case that $V_\sigma\cup\{A\}$ satisfied fip, we could take $m\in A\cap U_{F_k}\cap FS(S)$, $m>s_{k-1}$, and observe that $V_{\prec\sigma}\cup\{A,A-m\}$ must satisfy fip.  Since this cannot be the case, it must be that $V_\sigma\cup\{A\}$ fails to satisfy fip, as promised.

Then $\langle\rangle$ cannot be in the well-founded part of this tree, since there can be no semigroup $V$ with the property that $V$ fails to satisfy fip.  So there is an infinite path $\{b_1,s_1,\ldots\}$ through the tree; it is immediate to see that $\{s_i\},\{b_i\}$ are the desired witnesses.
\end{proof}

\begin{theorem}
  If $U$ is a semigroup and $\{A_i\mid i\in\mathbb{N}\}$ is a collection of sets, there is a semigroup $V$ extending $U$ such that for each $i$, either $A_i\gin V$ or $A_i^c\gin V$.
\end{theorem}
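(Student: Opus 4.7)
The plan is to apply the preceding theorem to $U$ and $\{A_i\}$ to obtain infinite sequences $S = \{s_i\}$ and $B = \{b_i\}$ with $FS(\{s_j\}_{j \geq i}) \subseteq b_i \cdot A_i$ for every $i$ and with $U \cup \{FS(S) - n \mid n \in FS(S)\}$ satisfying fip. Setting $V := U \cup \{FS(S) - n \mid n \in FS(S)\}$, the argument used for the theorem following Theorem \ref{part2} already shows that $V$ is a semigroup extending $U$: for any $X = FS(S) - n \in V$, take $Y := X$; then $Y \gin V$, and for each $m \in Y$ we have $m + n \in FS(S)$, hence $X - m = FS(S) - (m + n) \in V$.

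All that remains is to verify $b_i \cdot A_i \gin V$ for each $i$. For this I will strengthen the preceding theorem slightly by imposing the growth condition $s_{k+1} > 2 \sum_{j \leq k} s_j$ in its tree definition. This adjustment is cost-free: both the Kleene-Brouwer recursion and the construction of the infinite path merely require that some valid $s_k$ exist at each stage, and adding a lower bound never obstructs this. The resulting $S$ then admits unique finite-sum representations---each natural number has at most one expression as $\sum_{j \in G} s_j$ over finite $G \subseteq \mathbb{N}$---and moreover, no sum of the form $2 s_{j_0} + \sum_{j \in G'} s_j$ can be written as $\sum_{j \in H} s_j$ for any finite $H$.

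Set $T_i := s_1 + s_2 + \cdots + s_{i-1}$ (with $T_1 := 0$). Then $T_i \in FS(S)$, so both $FS(S)$ and $FS(S) - T_i$ belong to $V$. I claim
\[FS(S) \cap (FS(S) - T_i) \;=\; FS(\{s_j\}_{j \geq i}) \;\subseteq\; b_i \cdot A_i.\]
For the key inclusion, suppose $m \in FS(S)$ has (unique) representation $G_m$ and $m + T_i \in FS(S)$. If some $j_0 \in G_m \cap \{1, \ldots, i-1\}$, then the formal sum $\sum_{j \in G_m} s_j + \sum_{j=1}^{i-1} s_j$ for $m + T_i$ would contain $2 s_{j_0}$, which by the growth property cannot equal $\sum_{j \in H} s_j$ for any finite $H$, contradicting $m + T_i \in FS(S)$. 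Hence $G_m \cap \{1, \ldots, i-1\} = \emptyset$, so $m \in FS(\{s_j\}_{j \geq i})$; the reverse inclusion is immediate. Since both factors lie in $V$, this finite intersection witnesses $b_i \cdot A_i \gin V$.

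The main obstacle is the modification of the preceding theorem: one must revisit its tree definition, Kleene-Brouwer recursion, and infinite-path existence argument to confirm that none is disrupted by the extra constraint. All three are preserved because each stage uses only that some valid choice of $s_k$ exist, and the growth condition merely excludes small values from a still-infinite pool of candidates.
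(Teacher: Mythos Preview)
Your argument is correct, including the observation that the growth condition $s_{k+1}>2\sum_{j\leq k}s_j$ can be grafted onto the tree in the preceding theorem at no cost to the recursion or the infinite-path argument, and that it then forces $FS(S)\cap(FS(S)-T_i)=FS(\{s_j\}_{j\geq i})$.

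The paper itself gives no proof of this statement, treating it as an immediate corollary of the preceding theorem. The more direct reading of that intention is probably not your modification but rather a different choice of $V$: take
\[V:=U\cup\{FS(\{s_j\}_{j\geq i})-n\mid i\in\mathbb{N},\ n\in FS(\{s_j\}_{j\geq i})\},\]
so that $FS(\{s_j\}_{j\geq i})=FS(\{s_j\}_{j\geq i})-0\in V$ outright and $b_i\cdot A_i\gin V$ is immediate. The semigroup property is exactly as you check. For fip one uses the tree condition $s_k\in U_{F_k}$ (part of the preceding proof, though not its stated conclusion): given finite $G$ and finitely many pairs $(i_l,n_l)$ with $n_l=\sum_{j\in H_l}s_j$, $H_l\subseteq\{i_l,i_l+1,\ldots\}$, any $k>\max_l\max H_l$ with $G\subseteq F_k$ gives $s_k\in U_G$ and $s_k+n_l\in FS(\{s_j\}_{j\geq i_l})$, so $s_k$ lies in the intersection. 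Both routes therefore reach back into the preceding theorem---yours to tweak the tree definition, this one to use a property of the resulting path not recorded in the statement---so the difference is largely one of packaging; your version has the virtue that the $V$ it produces is exactly the one the preceding theorem already certifies as satisfying fip.
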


\bibliographystyle{plain}
\bibliography{combinatorics,ergodic,revmath}

\end{document}